\numberwithin{equation}{section}
\theoremstyle{plain}
\newtheorem{theorem}{Theorem}[section]
\newtheorem{lemma}[theorem]{Lemma}
\newtheorem{proposition}[theorem]{Proposition}
\theoremstyle{definition}
\newtheorem{definition}[theorem]{Definition}
\newtheorem{problem}[theorem]{Problem}
\newtheorem{case[theorem]}{Case}
\theoremstyle{remark}
\newtheorem{remark}[theorem]{Remark}
\numberwithin{equation}{section}
\begin{document}

\title{\parbox{14cm}{\centering{Sharpness of Falconer's estimate in continuous and arithmetic settings, geometric incidence theorems and distribution of lattice points in convex domains}}}

%    Information for first author

\author{Alex Iosevich and Steven Senger}

\begin{abstract} In the paper introducing the celebrated Falconer distance problem, Falconer proved that the Lebesgue measure of the distance set is positive, provided that the Hausdorff dimension of the underlying set is greater than $\frac{d+1}{2}$. His result is based on the estimate 
\begin{equation} \label{key} \mu \times \mu \{(x,y): 1 \leq |x-y| \leq 1+\epsilon \} \lesssim \epsilon, \end{equation} where $\mu$ is a Borel measure satisfying the energy estimate $I_s(\mu)=\int \int {|x-y|}^{-s} d\mu(x) d\mu(y)<\infty$ for $s>\frac{d+1}{2}$. An example due to Mattila (\cite{Mat87}, remark 4.5; \cite{Mat85}) shows in two dimensions that for no $s<\frac{3}{2}$ does $I_s(\mu)<\infty$ imply (\ref{key}). His construction can be extended to three dimensions. Mattila's example readily applies to the case when the Euclidean norm in (\ref{key}) is replaced by a norm generated by a convex body with a smooth boundary and non-vanishing Gaussian curvature. 

In this paper we prove, for all $d \ge 2$, that for no $s<\frac{d+1}{2}$ does $I_s(\mu)<\infty$ imply (\ref{key}) or the analogous estimate where the Euclidean norm is replaced by the norm generated by a particular convex body $B$ with a smooth boundary and everywhere non-vanishing curvature. Our construction, based on a combinatorial construction due to Pavel Valtr (\cite{V05}) naturally leads us to some interesting connections between the problem under consideration, geometric incidence theorem in the discrete setting and distribution of lattice points in convex domains. 

We also prove that Mattila's example can be discretized to produce a set of points and annuli for which the number of incidences is much greater than in the case of the lattice. In particular, we use the known results on the Gauss Circle Problem and a discretized version of Mattila's example to produce a non-lattice set of points and annuli where the number of incidences is much greater than in the case of the standard lattice. 

Finally, we extend Valtr's example into the setting of vector spaces over finite fields and show that a finite field analog of (\ref{key}) is also sharp. 
\end{abstract} 

\maketitle

%\tableofcontents

\section{Introduction}

The classical Falconer distance conjecture (\cite{Fal86}) says that if the Hausdorff dimension of a compact set $E \subset {\Bbb R}^d$, $d \ge 2$, is greater than $\frac{d}{2}$, then the Lebesgue measure of 
$$ \Delta(E)=\{|x-y|: x,y \in E\}$$ is positive. Here $|\cdot|$ denotes the Euclidean distance. The problem was introduced by Falconer in \cite{Fal86} where he proves that the Lebesgue measure of $\Delta(E)$, denoted by ${\mathcal L}^1(\Delta(E))$, is indeed positive if the Hausdorff dimension of $E$, denoted by $dim_{{\mathcal H}}(E)$ is greater than $\frac{d+1}{2}$. Since then, due to efforts of Bourgain (\cite{B94}), Erdogan (\cite{Erd05}), Mattila (\cite{Mat87}, \cite{M95}), Wolff (\cite{W99}) and others, the exponent has been improved, with the best current result due to Wolff in two dimensions (\cite{W99}) and Erdogan in higher dimensions (\cite{Erd05}). They proved that ${\mathcal L}^1(\Delta(E))>0$ provided that $dim_{{\mathcal H}}(E)>\frac{d}{2}+\frac{1}{3}$. See also \cite{MS99} where the authors prove that if $dim_{{\mathcal H}}(E)>\frac{d+1}{2}$, then $\Delta(E)$ contains an interval. 

Falconer's $\frac{d+1}{2}$ exponent follows from the following key estimate. Suppose that $\mu$ is Borel measure on $E$ such that 
$$ I_s(\mu)=\int \int {|x-y|}^{-s} d\mu(x) d\mu(y)<\infty$$ for every $s>\frac{d+1}{2}$. Then 
\begin{equation} \label{falconerestimate} \mu \times \mu \{(x,y): 1 \leq |x-y| \leq 1+\epsilon \} \lesssim \epsilon, \end{equation} where here and throughout, $X \lesssim Y$ means that there exists a uniform $C>0$ such that $X \leq CY$. 

This estimate follows by Plancherel and the fact that if $\sigma$ denotes the Lebesgue measure on the unit sphere, then 
\begin{equation} \label{stationaryphase} |\widehat{\sigma}(\xi)| \lesssim {|\xi|}^{-\frac{d-1}{2}}. \end{equation}

This implies, in particular, that (\ref{falconerestimate}) still holds if the Euclidean distance $|\cdot|$ is replaced by ${||\cdot||}_B$, where $B$ is a symmetric convex body with a smooth boundary and everywhere non-vanishing Gaussian curvature. This is because the estimate (\ref{stationaryphase}) still holds if $\sigma$ is replaced by $\sigma_B$, the Lebesgue measure on $\partial B$. To be precise, under these assumptions on $B$, the estimate 
\begin{equation} \label{Bfalconerestimate} \mu \times \mu \{(x,y): 1 \leq {||x-y||}_B \leq 1+\epsilon \} \lesssim \epsilon \end{equation} holds provided that $I_s(\mu)<\infty$ with $s>\frac{d+1}{2}$. 

A consequence of this more general version of (\ref{falconerestimate}) is that ${\mathcal L}^1(\Delta_B(E))>0$ whenever $dim_{{\mathcal H}}(E)>\frac{d+1}{2}$, where 
$$ \Delta_B(E)=\{{||x-y||}_B: x,y \in E\}.$$ 

\vskip.125in 

See, for example, \cite{AI04}, \cite{Hi05}, \cite{IL05}, \cite{IR07} and \cite{IR09} for the description of this generalization of the Falconer distance problem and its connections with other interesting problems in geometric measure theory and other areas. 

An example due to Mattila (see \cite{Mat85}) shows in two dimensions that for no $s<\frac{3}{2}$ does 
$$I_s(\mu)=\int \int {|x-y|}^{-s} d\mu(x)d\mu(y)<\infty$$ imply (\ref{Bfalconerestimate}). Mattila's construction can be generalized to three dimensions. However, in dimensions four and higher, his method does not seem to apply. It is important to note that in any
dimensions, an example due to Falconer (\cite{Fal86}) shows that for no $s<\frac{d}{2}$ does $I_s(\mu)<\infty$ imply that the estimate (\ref{Bfalconerestimate}) hold. We record these calculations for the reader's convenience in the Section \ref{mattila} below. 

In this paper we construct a measure in all dimensions which shows that for no $s<\frac{d+1}{2}$ does $I_s(\mu)<\infty$ imply that (\ref{Bfalconerestimate}) hold. More precisely, we have the following result. 
\begin{theorem} \label{main} There exists a symmetric convex body $B$ with a smooth boundary and non-vanishing Gaussian curvature, such that for any $s<\frac{d+1}{2}$, there exists a Borel measure 
$\mu_s$, such that $I_s(\mu) \approx 1$ and 
\begin{equation} \label{fail} \limsup_{\epsilon \to 0} \ \epsilon^{-1} \mu_s \times \mu_s \{(x,y): 1 \leq {||x-y||}_B \leq 1+\epsilon \}=\infty. \end{equation} 
\end{theorem}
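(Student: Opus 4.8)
The plan is to build the measure $\mu_s$ by discretizing a combinatorial point configuration, following the combinatorial construction of Valtr mentioned in the abstract. The idea is that if one can find, for each large integer $q$, a set of roughly $q^2$ points in a square and a family of roughly $q^2$ curves (translates of $\partial B$ scaled appropriately) realizing an unusually large number of incidences --- say $\gtrsim q^{2+\delta}$ for some $\delta>0$ --- then, by placing $\delta$-neighborhoods of these points with appropriately normalized mass, one obtains on each scale a contribution to $\mu\times\mu\{(x,y):1\le \|x-y\|_B\le 1+\epsilon\}$ that beats $\epsilon$ by a factor that blows up. The key is that the exponent in the incidence bound must exceed the "lattice-like" exponent by enough to leave room below $\frac{d+1}{2}$, so we get to choose the convex body $B$ to match the curves in Valtr's construction: $B$ is essentially the convex body whose boundary is the smooth curve (surface) carrying the incidences, arranged to have everywhere non-vanishing curvature.

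Concretely, I would proceed as follows. First, set up Valtr-type configuration: fix a smooth convex curve $\gamma=\partial B$ with non-vanishing curvature, and for a parameter $q$ exhibit $\asymp q^2$ points and $\asymp q^2$ homothetic copies of $\gamma$ (at radii near $1$) with $\gtrsim q^{2}\log q$ (or $q^{2+c}$) incidences; this is where lattice points in convex domains and the Gauss Circle Problem enter, since the number of lattice points on a dilate of $\gamma$ controls the incidence count, and one engineers $B$ so that many dilates pass through many lattice points. Second, pass from the discrete picture to a measure: at scale $\epsilon=\epsilon_q\to 0$ (with $\epsilon_q\asymp q^{-2}$ or similar), replace each of the $\asymp q^2$ points by a ball of radius $\asymp \epsilon_q$ carrying mass $\asymp q^{-2}$, so the total mass is $\asymp 1$; a pair $(x,y)$ with $\|x-y\|_B\in[1,1+\epsilon_q]$ corresponds exactly to an incidence between one point and an annular neighborhood of a dilated curve through another, so $\mu\times\mu\{1\le\|x-y\|_B\le 1+\epsilon_q\}\gtrsim (\text{\# incidences})\cdot q^{-4}\gtrsim \epsilon_q\cdot q^{\delta}$, giving the claimed failure of the Falconer bound along the sequence $\epsilon_q$. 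Third, check the energy integral: the energy $I_s(\mu)$ is dominated by the near-diagonal interactions inside each ball plus the interactions coming from the incidence structure; one verifies that for $s<\frac{d+1}{2}$ these sum to $O(1)$, using the separation of the points and the fact that no single annulus is overloaded beyond what the incidence bound allows. Finally, one assembles a single measure valid for a fixed $s<\frac{d+1}{2}$ by superposing the scale-$q$ constructions with a summable choice of weights, so that $I_s(\mu)\approx 1$ while the $\limsup$ in \eqref{fail} is infinite; alternatively, since the theorem only asks for $\mu_s$ depending on $s$, one may instead tune a single self-similar construction with parameters chosen in terms of $s$.

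The main obstacle I expect is quantitative: producing a genuinely \emph{smooth} convex body $B$ with \emph{everywhere} non-vanishing curvature for which the discretized curve family still carries the surplus number of incidences. Valtr's original construction uses polygonal or piecewise-flat curves, and lattice points lie on arcs of circles in the classical Gauss Circle heuristics; smoothing the boundary while keeping curvature bounded below and above \emph{and} retaining $\gtrsim q^{2+\delta}$ incidences (rather than the $O(q^{2+\epsilon})$ one always has, which would only cost a log and not a power) requires care. A secondary technical point is verifying the energy bound uniformly as $\epsilon_q\to0$: one must ensure the local clustering of the points (needed to create incidences) does not inflate $I_s(\mu)$ above a constant, which constrains how tightly the $\asymp q^2$ points may be packed and hence feeds back into the admissible range of $s$; getting this balance to hold for every $s<\frac{d+1}{2}$ (and no better) is the crux of the matching lower bound.
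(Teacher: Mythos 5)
Your overall architecture --- take an extremal point/curve incidence configuration, thicken each point to scale $\epsilon$ with mass $1/N$, verify $I_s(\mu)\approx 1$, and convert the incidence count into a lower bound for $\mu\times\mu\{1\le\|x-y\|_B\le 1+\epsilon\}$ --- is exactly the paper's strategy, and your normalization ($\#\text{incidences}\cdot N^{-2}$ versus $\epsilon=N^{-1/s}$) is the right bookkeeping. But the combinatorial engine you propose is not the one that works, and I believe it cannot be made to work. You suggest taking an essentially isotropic lattice of $N$ points and a family of \emph{homothetic dilates} of $\partial B$ at radii near $1$, with the incidence surplus coming from dilates passing through many lattice points, i.e.\ from Gauss-circle-type phenomena. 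This cannot produce the required surplus: to beat the Falconer bound for all $s$ up to $\frac{d+1}{2}$ you need the incidence count to reach $N^{2-\frac{2}{d+1}}$ (in the plane, $N^{4/3}$, the Szemer\'edi--Trotter extremal), i.e.\ each curve must contain $\approx N^{\frac{d-1}{d+1}}$ points of the configuration. A single dilate of a smooth convex curve with non-vanishing curvature through a standard lattice carries far too few lattice points for this (for circles of radius $R$ it is $R^{o(1)}$; for general such curves it is $O(R^{2/3})$, Konyagin/Jarn\'ik), and your fallback of a $q^2\log q$ incidence count is quantitatively useless here --- with $N$ points and $N\log N$ incidences the resulting measure bound $N^{-1}\log N$ is \emph{smaller} than $\epsilon=N^{-1/s}$ for every $s>1$, so the $\limsup$ does not blow up.

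The paper's construction (Valtr's) uses a different mechanism entirely: the point set is the \emph{anisotropic} grid $P_n=\{(i_1/n,\dots,i_{d-1}/n,\,i_d/n^2)\}$ with $N=n^{d+1}$ points, and the curves are \emph{translates} (not dilates) of the single paraboloid $\{(t,\dots,t,t^2)\}$ by elements of $P_n$. Because the map $t\mapsto t^2$ sends the coarse $1/n$-grid into the fine $1/n^2$-grid, each translate passes through $n^{d-1}=N^{\frac{d-1}{d+1}}$ points, giving the full $N^{2-\frac{2}{d+1}}$ incidences; the body $B$ is then built by gluing an upward and a downward paraboloid cap and smoothing the equatorial ridge (which is far from where the incidences live, so your worry about smoothing destroying the incidences does not arise --- the paraboloid is already smooth with non-vanishing curvature where it matters). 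The energy bound is then the explicit computation $N^{-2}\sum_{p\ne q\in P_n}|p-q|^{-s}\lesssim 1$ (the ``$s$-adaptability'' of $P_n$), done by comparing with a convergent integral over the unit cube, rather than an appeal to the incidence structure. So: right scaffolding, but the key lemma --- the source of the $N^{2-\frac{2}{d+1}}$ incidences compatible with a smooth curved norm --- is missing, and the replacement you propose would not supply it.
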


\begin{remark} The proof will show that $\epsilon^{-1}$ in (\ref{fail}) may be replaced by $\epsilon^{-\frac{2s}{d+1}-\gamma}$ for any $\gamma>0$. We also note that we only need to establish (\ref{fail}) with $s \ge \frac{d}{2}$ since if $s<\frac{d}{2}$, the example due to Falconer (\cite{Fal86}), mentioned above, does the job. 

Another way of stating the conclusion of Theorem \ref{main} is that for no $s<\frac{d+1}{2}$ does $I_s(\mu)<\infty$ imply that the distance measure is in $L^{\infty}({\Bbb R})$. The distance measure $\nu$ is defined by the relation 
$$ \int g(t) d\nu(t)=\int \int g({||x-y||}_B) d\mu(x) d\mu(y).$$ 
\end{remark} 

\vskip.125in 

\subsection{Structure of the paper} Theorem \ref{main} is proved in Section \ref{proof} below. The idea is to make a construction for a specific convex body obtained by glueing the upper and lower hemispheres of the paraboloid. In the Subsection \ref{combinatorics} we describe the combinatorial construction used in the proof of Theorem \ref{main}. We also describe another construction, due to Lenz and point out that it cannot be used in our setting. The proof of this assertion is postponed to Section \ref{discretevscontinuous}. In Subsection \ref{assembly} we use the combinatorial construction from Section \ref{combinatorics} to complete the proof of Theorem \ref{main}. In Section \ref{mattila} we describe an example due to Mattila and generalize it three dimensions. In Section \ref{discretevscontinuous} we explain the connection between the estimates (\ref{falconerestimate}) and (\ref{Bfalconerestimate}) and how this connection rules out the Lenz construction. In Section \ref{latticepoints} we discuss the connection between Mattila's example in Section \ref{mattila} and distribution of lattice points in large disks. In the final section of the paper we extend Valtr's example into the setting of vector spaces over finite fields and show that the finite field analog of (\ref{key}), proved by Misha Rudnev and the first listed author in (\cite{IR07ii}) is also sharp. 

\vskip.125in 

\section{Proof of the main result} 
\label{proof} 

\vskip.125in 

\subsection{Combinatorial underpinnings} \label{combinatorics} The proof of Theorem \ref{main} uses a generalization of the two-dimensional construction due to Pavel Valtr (see \cite{BMP00}, \cite{V05}). A similar construction can also be found in \cite{K77} in a slightly different context. Let 
$$ P_n=\left\{\left(\frac{i_1}{n}, \frac{i_2}{n}, \dots, \frac{i_{d-1}}{n}, \frac{i_d}{n^2}\right):0\leq i_j\leq n-1,\text{ for } 1\leq j\leq d-1, \text{ and } 1\leq i_d \leq n^2\right\}.$$ 
Notice that in each of the first $d-1$ coordinates, there are $n$ evenly distributed points, but in the last dimension, there are $n^2$ evenly distributed points. Now, let 
$$ H=\{(t,t, \dots, t,t^2) \in {\Bbb R}^d: t \in {\Bbb R}\}$$ and define 
$$ L_H=\{H+p, p \in P_n\}.$$ 

\vskip.125in

\begin{figure}
\centering
\includegraphics[scale=1]{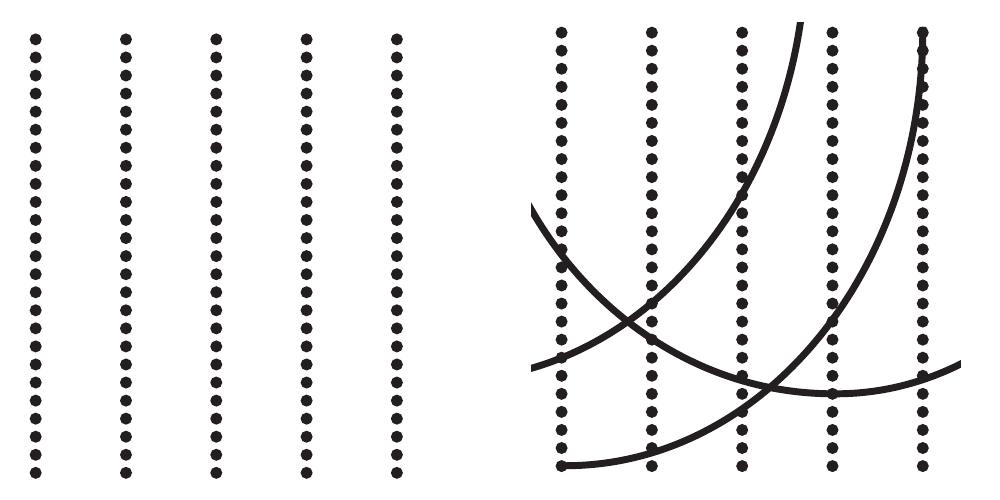}
\caption{On the left, we see a picture of the set $P_5$, on the right, we see it again with a few parabolic arcs, which intersect a point in each column.}
\label{ValtrFig1}
\end{figure}

Let $N=n^{d+1}$. By construction, $\# P_n= \# L_H = N$. Also by construction, each element of $L_H$ is incident to about $n^{d-1} \approx N^{\frac{d-1}{d+1}}$ elements of $P$. Thus the total number of incidences between $P$ and $L_H$ is 
$$\approx N^{1+\frac{d-1}{d+1}}=N^{\frac{2d}{d+1}}=N^{2-\frac{2}{d+1}}.$$

\subsubsection{Construction of the norm} \label{normconstruction} With this construction in hand, it is easy enough to flip the paraboloid upside down and glue it to another copy. Explicitly, let
$$B_U=\left\lbrace(x_1, x_2, \dots, x_d) \in \mathbb{R}^d : x_i \in [-1,1], \text{ for } 1 \leq i \leq d-1, \text{ and } x_d = 1-\left( x_1^2+x_2^2+ \dots + x_{d-1}^2 \right) \right\rbrace,$$
and
$$B_L=\left\lbrace(x_1, x_2, \dots, x_d) \in \mathbb{R}^d : x_i \in [-1,1], \text{ for } 1 \leq i \leq d-1, \text{ and } x_d = -1+ x_1^2+x_2^2+ \dots + x_{d-1}^2 \right\rbrace.$$
Now, let
$$B'=\left(B_U \cap \left\lbrace (x_1, x_2, \dots, x_d) \in \mathbb{R}^d :x_d \geq 0\right\rbrace \right) \cup \left( B_L \cap \left\lbrace (x_1, x_2, \dots, x_d) \in \mathbb{R}^d :x_d \leq 0 \right\rbrace \right).$$
Finally, define $B$ to be the convex body $B'$, with the ridge at the transition between $B_U$ and $B_L$ smoothed.

\vskip.125in 

Let $L$ denote $N$ copies of $\partial B$, each translated by an element of $P_n$.
Now we have a symmetric convex body $B \subset {\Bbb R}^d$ with a smooth boundary and everywhere non-vanishing curvature, a point set $P_n$ of size $N$ and a set $L$ of translates of $\partial B$, of size $\approx N$, such that the number of incidences between $P_n$ and $L$ is 
$\approx N^{2-\frac{2}{d+1}}$. 

The reader may be aware of the fact that in dimensions four and higher, a more dramatic combinatorial example is available. 

\vskip.125in 

\subsubsection{Lenz construction} (see e.g. \cite{BMP00}) \label{lenz} More precisely, choose $N/2$ points evenly spaced on the circle 
$$ \{(\cos(\theta), \sin(\theta), 0, 0): \theta \in [0, 2\pi)\}$$ and $N/2$ points evenly spaced on the circle 
$$ \{(0,0, \cos(\phi), \sin(\phi)): \phi \in [0,2\pi)\}.$$ Let $K_N$ be the union of the two point sets. It is not hard to check that all the distances between the points on one circle and the points on the other circle are equal to $\sqrt{2}$. It follows that the number of incidences between the points of $K_N$ and the circles of radius $\sqrt{2}$ centered at the points of $K_N$ is $\approx N^2$, which is about as bad as it can be and much larger than the $N^{2-\frac{2}{d+1}}$ obtained in the generalization of Valtr's example above. However, this construction will not help in the continuous setting due to certain peculiarities of the Hausdorff dimension. This, in turn, leads to some interesting combinatorial questions. We shall discuss this issue in the final section of this paper. 

\vskip.125in 

\subsection{Using combinatorial information to construct the needed measures} 
\label{assembly} 
Let $\frac{d}{2} \leq s<\frac{d+1}{2}$. There is no point going below $\frac{d}{2}$ because the lattice-based construction in \cite{Fal86} shows that (\ref{falconerestimate}) cannot hold in that regime. Partition ${[0,1]}^d$ into lattice cubes of side-lengths $\epsilon$ where $\epsilon^{-s}=N$ for some large integer $N$. Let 
$n=N^{\frac{1}{d+1}}$. Put $P_n$ in the unit cube and select any lattice cube which contains a point of $P_n$. Let $Q_n$ denote the set of centers of the selected lattice cubes.

Now, we define $L_\epsilon$ to be the union of the $\epsilon$-neighborhoods of the elements of $L$. That is, for every translate of $\partial B$ by an element, $p \in P_n$, let $l_p$ denote the locus of points that are within $\epsilon$ of the translate of $\partial B$ by $p$. Then $$L_\epsilon = \bigcup_{p\in P_n} l_p$$.

\begin{figure}
\centering
\includegraphics[scale=1]{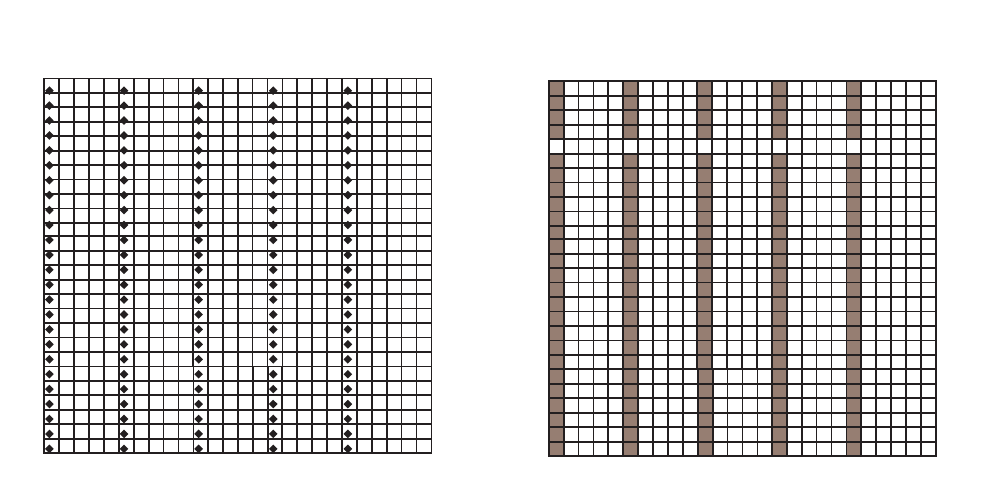}
\caption{On the left, we have $P_5$, in the partitioned unit cube. On the right, we filled in every cube which contained a point. Notice that there are gaps in the columns corresponding to cubes which did not contain any points.}
\label{ValtrFig2}
\end{figure}

\vskip.125in

\begin{lemma} \label{energy} Let $\mu_s$ denote the Lebesgue measure on the union of the colored cubes above, normalized so that 
$$ \int d\mu_s(x)=1.$$ 

More precisely, 
$$ d\mu_s(x)=\epsilon^{s-d} \sum_{p \in P_n} \chi_{R_{\epsilon}(p)}(x) dx,$$ where $R_{\epsilon}(p)$ denotes the cube of side-length $\epsilon$ centered at $p$. Then
$$ I_s(\mu_s) \approx 1.$$ 
\end{lemma}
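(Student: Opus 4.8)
The plan is to compute $I_s(\mu_s)$ directly by unwinding the definition of $\mu_s$ as a sum of normalized indicator functions of the $\epsilon$-cubes $R_\epsilon(p)$, $p \in P_n$. Writing
\[
I_s(\mu_s) = \epsilon^{2(s-d)} \sum_{p, p' \in P_n} \int_{R_\epsilon(p)} \int_{R_\epsilon(p')} |x-y|^{-s} \, dx \, dy,
\]
I would split the double sum into the diagonal contribution $p = p'$ and the off-diagonal contribution $p \neq p'$, and estimate each separately. Recall that $\# P_n = N = \epsilon^{-s}$, so $\epsilon^{s-d} = N^{-1} \epsilon^{-(d-s)}$ and $\epsilon^{2(s-d)} = N^{-2}\epsilon^{-2(d-s)}$; tracking these normalization powers correctly is the bookkeeping one has to get right.

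For the diagonal term, on a single cube $R_\epsilon(p)$ one has $\int_{R_\epsilon(p)}\int_{R_\epsilon(p)} |x-y|^{-s}\,dx\,dy \approx \epsilon^{2d-s}$ as long as $s < d$ (the integral $\int_{|z| \lesssim \epsilon} |z|^{-s}\,dz \approx \epsilon^{d-s}$ converges because $s < d$, and then integrating the base point over the cube contributes another $\epsilon^d$). Since $s \geq d/2$ we also have $s < d$ automatically for $d \geq 1$, so this is fine; summing over the $N$ choices of $p$ gives a diagonal contribution $\approx \epsilon^{2(s-d)} \cdot N \cdot \epsilon^{2d-s} = N^{-2}\epsilon^{-2(d-s)} \cdot N \cdot \epsilon^{2d-s} = N^{-1}\epsilon^{s} = N^{-1}\cdot N^{-1} = N^{-2}$, which is negligible. (I will double-check this arithmetic, but the point is that the diagonal is harmless.)

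For the off-diagonal term, when $p \neq p'$ the cubes are essentially disjoint and for $x \in R_\epsilon(p)$, $y \in R_\epsilon(p')$ one has $|x - y| \approx |p - p'|$, so the double integral over the pair of cubes is $\approx \epsilon^{2d} |p-p'|^{-s}$. Thus the off-diagonal contribution is $\approx \epsilon^{2(s-d)} \epsilon^{2d} \sum_{p \neq p'} |p-p'|^{-s} = \epsilon^{2s} \sum_{p \neq p' \in P_n} |p - p'|^{-s}$. So everything reduces to showing that the discrete energy sum satisfies $\sum_{p \neq p' \in P_n} |p-p'|^{-s} \approx \epsilon^{-2s} = N^2$, i.e., that the points of $P_n$ behave, for the purpose of this $s$-energy with $s$ in the range $[d/2, (d+1)/2)$, like a genuinely $d$-dimensional separated set of $N$ points in a set of diameter $\approx 1$. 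This is the main obstacle, and it is genuinely a bit delicate because $P_n$ is \emph{not} a cube of side $1$ with $n$ points per side in every direction: it has $n$ points per unit length in the first $d-1$ coordinates but $n^2$ points per unit length in the last coordinate, so it is anisotropic. I would handle it by fixing $p$ and summing over $p'$, grouping the points $p' \in P_n$ according to their distance to $p$: the number of lattice points of $P_n$ within distance $r$ of $p$ is comparable to (number in the first $d-1$ coordinates) $\times$ (number in the last coordinate) $\approx (nr)^{d-1}\cdot \min(n^2 r, 1)\cdot$(correction), and one checks that for $r$ ranging over dyadic scales the dominant contribution to $\sum_{p'} |p-p'|^{-s}$ comes from the largest scales $r \approx 1$ (this is exactly where $s < \frac{d+1}{2}$, equivalently $s$ small enough, is used — the sum over scales converges at the coarse end and is dominated there), giving $\sum_{p' \in P_n} |p - p'|^{-s} \approx N$, and then summing over the $N$ choices of $p$ yields $\approx N^2$ as needed. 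Combining the two estimates gives $I_s(\mu_s) \approx \epsilon^{2s}\cdot N^2 + N^{-2} \approx 1$, which is the claim. I will also note that the lower bound $I_s(\mu_s) \gtrsim 1$ is immediate from the off-diagonal term (or even from just a few well-separated pairs), so the content is really the matching upper bound on the discrete energy sum.
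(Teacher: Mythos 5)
Your overall structure is the same as the paper's: expand $I_s(\mu_s)$ over pairs of cubes, split into the diagonal term and the off-diagonal term, bound the diagonal by direct integration using $s<d$, and reduce the off-diagonal term to the discrete energy bound $N^{-2}\sum_{p\neq q\in P_n}|p-q|^{-s}\lesssim 1$. The one place you diverge is in how that discrete sum is estimated. The paper simply replaces the sum by an integral and changes variables $u=i'/n$, $u_d=i_d/n^2$, landing on $\int\int_{[0,1]^{2d}}|u-v|^{-s}\,du\,dv\lesssim 1$, which only visibly uses $s<d$; the role of the hypothesis $s<\frac{d+1}{2}$ is hidden in the legitimacy of the sum-to-integral comparison near the diagonal, where the anisotropic spacing ($n^{-2}$ in the last coordinate) produces $\approx N$ nearest-neighbor pairs each contributing $n^{2s}$, i.e.\ a total of $N^{-1}n^{2s}=n^{2s-(d+1)}$, which is bounded precisely when $s\le\frac{d+1}{2}$. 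Your dyadic count $\#\{p':|p-p'|\le r\}\approx(1+nr)^{d-1}(1+n^2r)$ makes this explicit and is, in that sense, the more honest argument; the only correction is that the scales $r\gtrsim 1/n$ are dominated by $r\approx 1$ because $s<d$, while it is the finest scales $r\in[n^{-2},n^{-1}]$ (not the coarse end) that require $s<\frac{d+1}{2}$ — your parenthetical attributes the role of that hypothesis to the wrong end of the dyadic sum, but carrying out the computation you describe fixes this automatically. One small arithmetic slip: $\epsilon^{s-d}=N^{-1}\epsilon^{-d}$, not $N^{-1}\epsilon^{-(d-s)}$, so the diagonal term is $\epsilon^{2(s-d)}\cdot N\cdot\epsilon^{2d-s}=N\epsilon^{s}\approx 1$ rather than $N^{-2}$; it is bounded but not negligible, which is still consistent with the claimed $I_s(\mu_s)\approx 1$ and matches the paper's own bound $I\lesssim\epsilon^{s}N\lesssim 1$.
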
 

\vskip.125in 
\begin{proof}
To prove the lemma, observe that 

\vskip.125in 

$$ I_s(\mu) = \int \int {|x-y|}^{-s} d\mu_s(x) d\mu_s(y)$$
$$=\epsilon^{2(s-d)} \sum_{p,q \in P_n} \int \int {|x-y|}^{-s}  \chi_{R_{\epsilon}(p)}(x)  
\chi_{R_{\epsilon}(q)}(y) dxdy$$
$$=\epsilon^{2(s-d)} \sum_{p \in P_n} \int \int {|x-y|}^{-s}  \chi_{R_{\epsilon}(p)}(x)  
\chi_{R_{\epsilon}(p)}(y) dxdy$$
$$+\epsilon^{2(s-d)} \sum_{p \not=q \in P_n} \int \int {|x-y|}^{-s}  \chi_{R_{\epsilon}(p)}(x) \chi_{R_{\epsilon}(q)}(y) dxdy=I+II.$$

\vskip.125in 

We have 
$$ I=\epsilon^{2(s-d)} \sum_{p \in P_n} \int_{R_{\epsilon}(p)} \int_{R_{\epsilon}(p)} {|x-y|}^{-s} dxdy.$$

\vskip.125in 

Making the change of variables $X=x-y, Y=y$, we see that 

\vskip.125in 

$$ I \lesssim \epsilon^{2(s-d)} \sum_{p \in P_n} \epsilon^d \int_{|X| \leq \sqrt{d}\epsilon} {|X|}^{-s} dX$$
$$ \lesssim \epsilon^{2(s-d)} \cdot \epsilon^d \cdot \epsilon^{d-s} \sum_{p \in P_N} 1$$
$$ \lesssim \epsilon^s \cdot N \lesssim 1.$$ 

\vskip.125in 

On the other hand, 
$$ II \approx \sum_{p \not=q \in P_n} {|p-q|}^{-s} \epsilon^{2s}$$
$$=N^{-2} \sum_{p \not=q \in P_n} {|p-q|}^{-s}.$$

\vskip.125in 

We have 
$$p=(p',p_d)=\left( \frac{i_1}{n}, \dots, \frac{i_{d-1}}{n}, \frac{i_d}{n^2} \right)$$ and 
$$q=(q',q_d)=\left( \frac{j_1}{n}, \dots, \frac{j_{d-1}}{n}, \frac{j_d}{n^2} \right).$$ 

Let $i'=(i_1, \dots, i_{d-1})$ and $j'=(j_1, \dots, j_{d-1})$. 

\vskip.125in 

Thus we must consider 
$$ N^{-2} \sum_{i \not=j; |i'|, |j'| \leq n; i_d, j_d \leq n^2} {\left| \left| \frac{i'-j'}{n} \right|+\left| \frac{i_d-j_d}{n^2} \right| \right|}^{-s}.$$

\vskip.125in 

Replacing the sum by the integral, we obtain 

\vskip.125in 

$$ N^{-2} \int \int \dots \int_{\substack{ |i_1|, |j_1|, \dots, |i_{d-1}|, |j_{d-1}| \leq n\\ i_d, j_d \leq n^2}} {\left| \left| \frac{i'-j'}{n} \right|+\left| \frac{i_d-j_d}{n^2} \right| \right|}^{-s}di'dj'di_ddj_d,$$

\vskip.125in 

which, by a change of variables, $u' = (i'/n), u_d = (i_d/n^2), v'=(j'/n),$ and $v_d = (j_d/n^2),$ with similarly named coordinates, becomes
$$=\int \int \dots \int_{\substack{u \not=v\\  |u_1|, |v_1|, \dots, |u_{d-1}|, |v_{d-1}| \leq 1\\ u_d, v_d \leq 1}} {|u-v|}^{-s} du'dv'du_ddv_d \lesssim 1.$$

\vskip.125in 

This completes the proof of Lemma \ref{energy}. 
\end{proof}
\vskip.125in 

We are now ready to complete the argument in the case of the paraboloid. We have 
$$ \mu_s \times \mu_s \{(x,y): 1 \leq {||x-y||}_B \leq 1+\epsilon \}$$ is $\approx C \epsilon^{2s}$ times the number of incidences between the elements of $Q_n$ and $L_{\epsilon}$, where $Q_n$ and $L_{\epsilon}$ are constructed in the beginning of this section. Invoking our generalization of Valtr's construction from Section \ref{combinatorics} above, we see that 
$$ \mu_s \times \mu_s \{(x,y): 1 \leq {||x-y||}_B \leq 1+\epsilon \} \approx \epsilon^{2s} \cdot 
N^{2-\frac{2}{d+1}} \approx N^{-\frac{2}{d+1}}.$$ 

This quantity is much greater than $\epsilon=N^{-\frac{1}{s}}$ when $s<\frac{d+1}{2}$. This completes the proof of Theorem \ref{main}. 

\vskip.125in 

\vskip.125in 

\section{Mattila's construction} 
\label{mattila} 

In this section we describe Mattila's construction from \cite{Mat85} and its generalization to three dimensions.

First, we review the method of constructing a Cantor set of a given Hausdorff dimension, (see \cite{M95}). If we want a Cantor set, $\mathcal{C}_\alpha$, of Hausdorff dimension $0<\alpha<1$,  we need to find the $0<\lambda<1/2$ which satisfies $\alpha = \log 2 / \log (1/\lambda).$ Start with the unit segment, then remove the interval $(\frac{1}{2}- \lambda /2, \frac{1}{2}+\lambda/2)$. Next, remove the middle $\lambda$-proportion of each of the remaining subintervals, and so on. The classic ``middle-thirds" Cantor set would be generated with $\lambda = 1/3$.

To construct the two-dimensional example, $\mathcal{M}_2(\alpha)$, we let $F=\left(\mathcal{C}_\alpha\right)\cup \left(\mathcal{C}_\alpha-1\right)$. Then define $\mathcal{M}_2(\alpha) = F \times [0,1].$ Define the measure $\mu$ to be $\left(\mathcal{H}^\alpha|F\right)\times\left(\mathcal{L}^1|[0,1]\right),$ where $\mathcal{H}^\alpha$ is the $\alpha$-dimensional Hausdorff measure.

Pick a point $x=(x_1,x_2) \in \mathcal{M}_2(\alpha)$. Notice that if $x_1\in F$, either $x_1+1$ or $x_1-1$ is also in $F$. So there is an $\epsilon$-annulus, with radius 1, centered at $x$, which contains a rectangle of width $\epsilon$ and length $\sqrt{\epsilon}$. This rectangle intersects $\mathcal{M}_2(\alpha)$ lengthwise. The measure of this intersection is $\epsilon^{1/2+\alpha}$. This follows easily from the fact that the circle has non-vanishing curvature. It follows that 

$$ \mu\left\{y:1\leq|x-y|\leq1+\epsilon\right\} \gtrsim \epsilon^{\alpha+1/2} $$ for every $x$. It follows that 
$$ \mu \times \mu \{(x,y): 1 \leq |x-y| \leq 1+\epsilon \}$$
$$=\int  \mu\left\{y:1\leq|x-y|\leq1+\epsilon\right\} d\mu(x)$$
$$ \gtrsim \epsilon^{\alpha+1/2}.$$ 

We conclude that 
$$ \mu \times \mu \{(x,y): 1 \leq |x-y| \leq 1+\epsilon \} \lesssim \epsilon$$ only if 
$$ \epsilon^{\alpha+\frac{1}{2}} \lesssim 1,$$ which can only hold if 
$$ \alpha \ge \frac{1}{2}.$$ 

Thus the estimate (\ref{Bfalconerestimate}) does not in general hold for sets with Hausdorff dimension less than $\frac{3}{2}$. Letting $\alpha$ get arbitrarily small yields a family of counterexamples with Hausdorff dimensions arbitrarily close to $1$ , below which there are already counterexamples. See, for example, \cite{Fal86}. Note that we worked in $[-1,1] \times [0,1]$ instead of $[0,1] \times [0,1]$, to allow the main point to shine.

To construct $\mathcal{M}_3(\delta)$, the three-dimensional example, we set
$$\mathcal{M}_3(\delta) = \left(\mathcal{C}_\alpha \cup \mathcal{C}_\alpha-1\right) \times \left(\mathcal{C}_\alpha \cup \mathcal{C}_\alpha-1\right) \times \mathcal{C}_\beta,$$
where $\alpha = 1-\delta$, and $\beta = \delta/2$, and $\delta$ is determined later. We will set $\mu$ to be a product of the appropriate Hausdorff measures restricted to this set, much like the previous example. Notice that $\mathcal{M}_3(\delta)$ has a Hausdorff dimension of $2-\frac{3}{2}\delta$, and for a given point $x \in \mathcal{M}_3(\delta)$, there is an $\epsilon^{\frac{1}{2}}$ by $\epsilon^{\frac{1}{2}}$ by 
$\epsilon$ box inside the annulus whose measure is 
$$\epsilon^{\alpha/2} \cdot \epsilon^{\alpha/2} \cdot \epsilon^{\beta} = \epsilon^{1-\frac{\delta}{2}}.$$

Once again, we have used the fact that the sphere has non-vanishing Gaussian curvature, which implies, by elementary geometry, that the $\epsilon$-annulus contains and $\epsilon^{\frac{1}{2}}$ by $\epsilon^{\frac{1}{2}}$ by $\epsilon$ box. It follows that 

$$ \mu\left\{y:1\leq|x-y|\leq1+\epsilon\right\} \gtrsim \epsilon^{1-\frac{\delta}{2}}$$ for every $x$, which means that 
$$ \mu \times \mu \{(x,y): 1 \leq |x-y| \leq 1+\epsilon \} \gtrsim \epsilon^{1-\frac{\delta}{2}},$$ so (\ref{Bfalconerestimate}) does not hold. 

Thus we shown that for $s<2=\frac{d+1}{2}$ (when $d=3$), $I_s(\mu)<\infty$ does not imply that (\ref{Bfalconerestimate}) holds. Obvsere that both constructions in this sections work for any convex $B$ such that $\partial B$ is smooth and has everywhere non-vanishing Gaussian curvature. 

In Chapter \ref{latticepoints} below we shall see that Mattila's example can be used to construct a discrete point set and a family of annuli such that the number of incidences is much greater than it is in the corresponding problem where the point set is a lattice. 

It is interesting to note that Mattila's example can be adapted (see \cite{CEHIT10}) to study the sharpness of the following generalization of the Falconer estimate: 
$$ \mu \times \dots \times \mu \{(x^1, \dots, x^{k+1}): t_{ij} \leq {||x^i-x^j||}_B \leq t_{ij}+\epsilon \} \lesssim \epsilon^{k+1 \choose 2}.$$ 

This estimate is used to study $k$-simplexes in the way that the Falconer's estimate is adapted to study distance, which may be viewed as $1$-simplexes, or two-point configurations. This problem may be viewed as a fractal analog of the results initially studied by Furstenberg, Katznelson and Weiss (\cite{FKW90}). 

\vskip.125in 

\section{Connections between the inequality (\ref{Bfalconerestimate}) and the discrete incidence theorem} 
\label{discretevscontinuous}

\vskip.125in 

The reason we are able to use a generalization of Valtr's construction and are unable to use the Lenz construction described in subsubsection \ref{lenz} is due to Lemma \ref{energy}. Roughly speaking, the idea is the following. We want to take a discrete set of points in ${\Bbb R}^d$ and turn it into a set of Hausdorff dimension $s>0$ by thickening each of the $N$ points by $N^{-\frac{1}{s}}$. The resulting set has positive Lebesgue measure, of course, but the measure tends to $0$ as $N$ approaches infinity and we may view this set as $s$-dimensional if the energy integral 
$$ I_s(\mu)=\int \int {|x-y|}^{-s} d\mu(x) d\mu(y) \approx 1,$$ where $d\mu$ is the Lebesgue measure on the resulting union of $N^{-\frac{1}{s}}$ balls, normalized so that 
$$ \int d\mu(x)=1.$$ 

Unfortunately, this is not always possible as the Hausdorff dimension measures not just the amount of mass present, but also its distribution within the set. This issue was taken up in \cite{Hi05}, \cite{IL05} and \cite{IJL09} in the context of Delone (or homogeneous) sets and in \cite{IRU09} in more generality. 

More precisely, a discrete set can be turned into a fractal set of uniform Hausdorff dimension $s$ by this procedure if the following condition holds. 
\begin{definition} \label{adaptable} Let $P_N$ be a nested family of discrete subsets of the unit cube ${[0,1]}^d$, $d \ge 2$, consisting of $N$ points. We say that $P_N$ is $s$-adaptable if 

\vskip.125in 

$$ N^{-2} \sum_{p,q \in P_N} {|p-q|}^{-s} \lesssim 1$$ with constants independent of $N$. 
\end{definition} 

The $s$-adaptability condition holds for Valtr's construction that we generalize in this paper, but it is not true for the Lenz construction. We now make this claim precise. 

\begin{proposition}
The Lenz construction, $K_N$, is not $s$-adaptable for $s>1$.
\end{proposition}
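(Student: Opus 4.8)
The plan is to show that the energy sum $N^{-2}\sum_{p,q\in K_N}|p-q|^{-s}$ blows up as $N\to\infty$ when $s>1$, which by Definition~\ref{adaptable} means $K_N$ is not $s$-adaptable. Recall $K_N$ consists of $N/2$ points equally spaced on a unit circle $C_1$ lying in the $x_1x_2$-plane and $N/2$ points equally spaced on a unit circle $C_2$ lying in the $x_3x_4$-plane, the two circles being ``orthogonal'' so that every $p\in C_1$, $q\in C_2$ has $|p-q|=\sqrt2$. Split the double sum into three pieces: the cross terms (one point on each circle), and the two intra-circle terms. The cross terms contribute $\approx N^{-2}\cdot(N/2)^2\cdot(\sqrt2)^{-s}\approx 1$, which is harmless. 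The intra-circle terms are where the obstruction lives.

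Next I would estimate $N^{-2}\sum_{p\ne q\in C_1}|p-q|^{-s}$. Parametrize the $N/2$ points on $C_1$ by angle $2\pi k/(N/2)$, $k=0,\dots,N/2-1$; then the chord length between points $k$ apart is $2\sin(\pi k/(N/2))\approx k/N$ for $k$ small (say $1\le k\le N/4$), and bounded below by a constant for $k$ comparable to $N$. Hence
\begin{equation}\label{eq:lenzsum}
N^{-2}\sum_{p\ne q\in C_1}|p-q|^{-s} \approx N^{-2}\cdot N \sum_{k=1}^{N/4}\left(\frac{k}{N}\right)^{-s} \approx N^{-1}\cdot N^{s}\sum_{k=1}^{N/4} k^{-s} = N^{s-1}\sum_{k=1}^{N/4}k^{-s}.
\end{equation}
Now analyze the tail sum by cases in $s$: for $s>1$ the sum $\sum_k k^{-s}$ converges to a constant, so the right-hand side of \eqref{eq:lenzsum} is $\approx N^{s-1}\to\infty$; for $s=1$ it is $\approx \log N\to\infty$; and even for $s<1$ one gets $\approx N^{s-1}\cdot N^{1-s}=1$ up to constants, which is why the threshold is exactly $s=1$. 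Since we only need $s>1$, the divergence $N^{s-1}\to\infty$ already gives the claim. The same estimate applies verbatim to $C_2$. Adding the three contributions, the total energy sum is $\gtrsim N^{s-1}$, which is unbounded in $N$, so the $s$-adaptability inequality fails.

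The main obstacle — really the only delicate point — is justifying the chord-length asymptotics and the passage from the sum to \eqref{eq:lenzsum} carefully enough: one must check that the small-$k$ regime $|p-q|\approx k/N$ dominates and that replacing $2\sin(\pi k/(N/2))$ by a constant multiple of $k/N$ introduces only bounded multiplicative errors, uniformly in $N$. This is elementary (the estimate $\tfrac{2}{\pi}\theta \le \sin\theta\le\theta$ on $[0,\pi/2]$ does it), but it is the step where the exponent $s-1$ is actually produced, so it deserves to be written out rather than waved through. Everything else — the splitting of the sum, the triviality of the cross terms, and the conclusion via Definition~\ref{adaptable} — is bookkeeping.
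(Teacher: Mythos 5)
Your proof is correct and follows essentially the same route as the paper: restrict attention to pairs on a single circle, observe that points $j$ steps apart have chord length comparable to $j/N$, and conclude that the energy sum is bounded below by $N^{-2}\cdot N\cdot\sum_j (j/N)^{-s}\gtrsim N^{s-1}$, which diverges for $s>1$. The only cosmetic differences are that you explicitly dispose of the cross-circle terms (unnecessary for a lower bound, but harmless) and record the borderline cases $s=1$ and $s<1$, whereas the paper simply restricts to a quarter-arc of one circle and to nearby pairs.
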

\begin{proof}
Let $K_N'$ denote the points on the circle in the first quadrant of the first two dimensions.
\begin{align*}
N^{-2} \sum_{p,q \in K_N} {|p-q|}^{-s} &\gtrsim N^{-2} \sum_{p,q \in K_N'} {|p-q|}^{-s}\\
&\gtrsim N^{-2} \sum_{p \in K_N'} \sum_{\substack{q\in K_N'\\|p-q|\leq 1/2}} {|p-q|}^{-s}\\
&\gtrsim N^{-2} N \sum_{j=1}^{\frac{N}{6}} \left(\frac{j}{2N}\right)^{-s}\\
&\gtrsim N^{s-1} \sum_{j=1}^{\frac{N}{6}} j^{-s}\\
&\gtrsim N^{s-1},
\end{align*}
where, in the second inequality, when we fix a point $p$, we measure the distance to points $q$ which are close enough, so that in the next inequality, these distances are at least one-half of $\frac{j}{N}$, where the point $q$ is $j$ points away from $p$ along the circle. The quantity $N^{s-1}$ is unbounded as $N$ grows large for $s>1$.
\end{proof}

Notice that the proof did not rely on the ambient dimension at all. This raises the possibility of studying the Erd\H os single distance conjecture in higher dimensions under the assumption that the set is $s$-adaptable. This definition may be viewed as a natural generalization of the notion of homogeneous sets, used, for example, by Solymosi and Vu in \cite{SV04}. 

%One can adapt the techniques in \cite{IJL09}, originally applied to homogeneous sets, to prove the following result. 
%\begin{theorem} \label{incidencetheorem} Suppose that $P_N$ is $s$-adaptable with $s \ge \frac{d+1}{2}$. Then for any $t>0$, 
%\begin{equation} \label{incidence} \# \{(x,y) \in P_N: {||x-y||}_B=t \} \lesssim N^{2-\frac{1}{s}}.\end{equation} for any convex body $B$ with a smooth boundary and everywhere non-vanishing Gaussian curvature. 
%\end{theorem} 

%The question is whether one can push the range of exponents $s$ for which (\ref{incidence}) holds below $\frac{d+1}{2}$. Valtr's example shows that this is not possible for general metrics, even the ones generated by norms where the boundary of the underlying convex body is smooth and has non-vanishing Gaussian curvature. Mattila's example, described in Section \ref{mattila} above, shows that this is not possible in dimensions two and three for any norm generated by a convex body $B$ with a smooth boundary and everywhere non-vanishing Gaussian curvature. The question is, what happens in dimension four and higher. We hope to address this question in a subsequent paper. 

\vskip.125in 

\section{Connections with the distribution of lattice points in convex domains} 
\label{latticepoints} 

One of the main themes of this paper is to carefully examine the interplay between discrete and continuous theory. In this section, we present a natural question involving geometric incidences, whose discrete analog, the Erd\H os single distance problem, has already received a fair amount of attention. For the single distance problem, the integer lattice exhibits the most extreme behavior of which we are aware. In our related incidence problem, we notice that there are sets which have more extreme behavior than the lattice. In what follows, we will compute the number of incidences for the lattice, which will involve the Gauss Circle Problem. Then we will give lower bounds for the number of incidences of another set, which is analogous to $\mathcal{M}_2(\alpha)$ above.

The Erd\H os single distance problem asks, given a fixed number, $N$, how often a particular distance can be determined by pairs of points from any set of $N$ points. The conjecture is that no distance can occur more often than $\lessapprox N$ times. This bound is attained when the $N$ points are arranged in an $\sqrt{N} \times \sqrt{N}$ lattice. Currently, the best estimate is $N^{\frac{4}{3}}$, by Spencer, Szemer\'edi, and Trotter, in \cite{SST84}. See also, Sz\'ekely, \cite{Sz97}.

This can easily be seen as a question of incidences between points and circles of a fixed radius. We explore the same question of incidences, but with points and annuli of a fixed radius. Interestingly enough, if we properly convert the set $\mathcal{M}_2(\alpha)$, into a discrete point set, it can have more point-annulus incidences than the lattice.

After showing the two-dimensional cases in full detail, we show that there is a similar phenomenon in three dimensions by constructing a discrete analog of $\mathcal{M}_3(\delta)$, and comparing the number of incidences there with the number incidences in a three-dimensional lattice.

For each of the sets below, the parameter $s$ will play an important role. In these examples, it can be effectively viewed as a discrete analog of the Hausdorff dimension. This will help to illustrate some of the direct connections between continuous and discrete theory. These incidence examples will serve to demonstrate precisely where some of the connections break down. They will also provide more evidence that the circle is a special case, exemplifying the delicate balance between extremal behavior and expected behavior in geometric incidence problems.

\vskip.125in 

The Gauss Circle Problem asks, given a symmetric, convex body $B$, for the size of the discrepancy term as a function of the radius of the body, $R$.
$$ D(R)=\# \{RB \cap {\Bbb Z}^d \}-R^d|B| \ \text{as} \ R \to \infty.$$

When $B$ is the unit ball, it is known that 
$$ |D(R)| \lesssim R^{d-2}, \ \text{for} \ d \ge 5$$ and 
$$ |D(R)| \lessapprox R^2, \ \text{when} \ d=4,$$ 

In two and three dimensions, the problem is far from solved, with the best known estimates due to Heath-Brown (\cite{HB97}) and Huxley (\cite{Hux90}), respectively. See \cite{Hux96} and the references contained therein for the description of the problem and related results. 
For $d=2$, the Gauss Circle Conjecture states that $|D(R)| \approx R^{1/2} \log R$. 

\subsection{The two-dimensional case}
\subsubsection{The lattice example}

The result we need is the most recent bound by Huxley, namely

\begin{equation}\label{huxley}
|D(R)| \lesssim R^{131/208}\left(\log(R)\right)^{18627/8320}.
\end{equation}

Given $N$ points, construct a $\sqrt{N} \times \sqrt{N}$ lattice, and scale it down so that it fits into the unit square. The number of incidences between points in the lattice and annuli centered at points in the lattice will be about the number of points in an annulus centered at the origin times the number of points in the set. To count the number of points in the annulus of thickness $\epsilon = N^{-1/s}$ and radius proportional to say, one-tenth, we will count the number of points in the circle of radius $1/10 + \epsilon$ and subtract the number of points in the circle of radius $1/10$. Both of these counts will have discrepancies. In order to get an accurate bound on the number of points in each circle, we will appeal to \ref{huxley}.

Let $a$ be the number of points in the annulus centered at the origin, with radius $1/10$ and thickness $\epsilon$. Let $c(R)$ denote the number of lattice points in the ball of radius $R$, centered at the origin, where the appropriate dimension is assumed. We will compute the number of integer lattice points in the annulus of radius $\sqrt{N}/10$, with thickness $\epsilon \sqrt{N}$, centered at the origin, as this will be identically equal to $a$, as it is the same setting as before, but scaled up  by $\sqrt{N}$. Then,

\begin{align*}
a &\approx c\left(\frac{\sqrt{N}}{10} + \sqrt{N} \cdot N^{-\frac{1}{s}}\right)- c\left(\frac{\sqrt{N}}{10}\right)\\
&= \pi\left(\frac{\sqrt{N}}{10} + \sqrt{N} \cdot N^{-\frac{1}{s}}\right)^2 - \left|D\left(\frac{\sqrt{N}}{10} + \sqrt{N} \cdot N^{-\frac{1}{s} }\right)\right| - \pi\left(\frac{\sqrt{N}}{10} \right)^2 + \left|D\left(\frac{\sqrt{N}}{10} \right)\right|.\\
&\approx N^{\frac{1}{2}-\frac{1}{s}} + \left|D\left(\sqrt{N}\right)\right|.
\end{align*}

We want the first term to dominate. Notice that by (\ref{huxley}), we can bound the error term by

\begin{align*}
\left|D\left(\sqrt{N}\right)\right| &\lesssim \left(\sqrt{N}\right)^{\frac{131}{208}}\left(\log\left(\sqrt{N}\right)\right)^{\frac{18627}{8320}}\\
&\lesssim \left(\sqrt{N}\right)^{\frac{131}{208}}\log N.
\end{align*}

So we need to be sure that

$$ N^{\frac{1}{2}-\frac{1}{s}} \gtrsim \left(\sqrt{N}\right)^{\frac{131}{208}}\log N,$$
which happens when $s > \frac{416}{285}$. Note that this reduces to $\frac{4}{3}$ if we assume the full conjecture.

Now, in our range on $s$, we have that the number of incidences between points in our set and annuli is $I_1=Na \approx N^{2-\frac{1}{s}}$.

\subsubsection{The Mattila-type example}

To construct this example, we construct a discrete analog of the set $\mathcal{M}_2(\alpha)$. For a given $N$ and $\alpha$, the set will consist of $N$ points. Let $M =N^{\frac{1}{1+\alpha}}$, and $m = 2^{\frac{1}{\alpha}},$ so that $\alpha = \log 2/ \log m.$ Also, let $s=1+\alpha$.

Start with the unit segment and delete middle intervals of proportion $1/m$, to form a Cantor-like set, but only perform $n$ deletions, where $n$ satisfies $m^n=M$. Now, replace each interval by a single point located at the center of the interval. Call this set $\mathcal{C}_{\alpha,n}$. There should now be $2^n$, or $M^{\alpha}$ points, each located at the center of what was an interval of a Cantor-like set. Now, define the set $F_n$ to be $\mathcal{C}_{\alpha,n} \cup \left(\mathcal{C}_{\alpha,n}-1\right)$.

Now, define the set $\mathcal{L}_n$ to be $M$ points, spaced evenly in the interval $[0,1]$. Our new set will be called $\mathcal{M}_{2,n}(\alpha)$, and it will be formed by taking the Cartesian product of $F_n$ and $\mathcal{L}_n.$ Notice that it consists of $N$ points.

\begin{figure}
\centering
\includegraphics[scale=1]{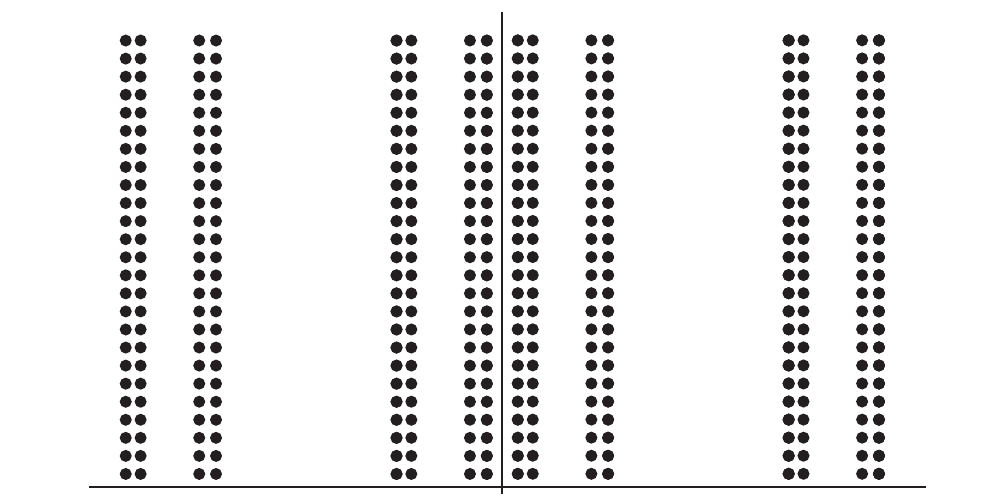}
\caption{Here we see $2^n$ points located at the centers of the intervals of a Cantor-like set in the $x$ direction, and $m^n$ points evenly spaced in the $y$ direction.}
\label{MattilaFigure}
\end{figure}

Now, as before, in any annulus of radius 1, there will be a rectangle of length $m^{-n/2}$, and width $m^{-n}$, which contains about $m^{n/2}$ points. Since there are $N$ such annuli, the total number of incidences will be about
$$I_2=Nm^{n/2} = M^{\frac{3}{2} + \alpha} = N^\frac{\frac{3}{2}+\alpha}{1+\alpha} = N^{1+\frac{1}{2s}}.$$

\subsubsection{Comparing the lattice to the Mattila-type set}

The final step is to actually compare the two sets. Notice that we have allowed each of them to be determined completely by two parameters, $N$ and $s$. We will show that for a particular range of $s$, the Mattila-type set, $\mathcal{M}_{2,n}(\alpha)$, has more incidences than the lattice.

\begin{align*}
I_1 &\lesssim I_2\\
N^{2-\frac{1}{s}} &\lesssim N^{1+\frac{1}{2s}}\\
1 &< \frac{3}{2s}\\
s &< \frac{3}{2}.\\ 
\end{align*}
 
Recalling the ranges of $s$ for which both incidence estimates hold, we note that these estimates are only valid when $s>\frac{416}{285}$, although this would be reduced to $s>\frac{4}{3}$ if we assumed the full strength of the Gauss Circle Conjecture. In summary, if $s$ is within the prescribed range, $\mathcal{M}_{2,n}(\alpha)$ provides more point-annulus incidences than the lattice.

\subsection{The three-dimensional case}

\vskip.125in 

\subsubsection{The lattice example}

The three-dimensional lattice is a Cartesian product of three copies of the set of $N^{\frac{1}{3}}$ evenly spaced points in the unit segment. Of course, it has $N$ points total. In three dimensions, instead of annuli, we will use spherical shells of radius $\frac{1}{10}$, and thickness $N^{-\frac{1}{s}}$. Let $b$ be the number of lattice points in the shell centered at the origin. As before, we can scale up so that the points are 1-seperated, then appeal to the most recent results of the Gauss Circle Problem. In three dimensions, we use the exponent in \cite{HB97}, which bounds the discrepancy by
\begin{equation}\label{HB}
|D(R)| \lesssim R^{\frac{21}{16}}.
\end{equation}

Let $b$ denote the number of lattice points in the spherical shell centered at the orign.

\begin{align*}
b &= c\left(\left(\frac{N}{10}\right)^{\frac{1}{3}} + N^{\frac{1}{3}-\frac{1}{s}}\right)- c\left(\left(\frac{N}{10} \right)^{\frac{1}{3}}\right)\\
&= \pi\left(\left(\frac{N}{10} \right)^{\frac{1}{3}} + N^{\frac{1}{3}-\frac{1}{s}}\right)^3 - \left|D\left(\left(\frac{N}{10} \right)^{\frac{1}{3}} + N^{\frac{1}{3}-\frac{1}{s}}\right)\right| - \pi\left(\left(\frac{N}{10} \right)^{\frac{1}{3}}\right)^3 + \left|D\left(\left(\frac{N}{10} \right)^{\frac{1}{3}}\right)\right|\\
&\approx N^{1-\frac{1}{s}} + \left|D\left(N^{\frac{1}{3}}\right)\right|.
\end{align*}

Again, when estimate the discrepancy using Heath-Brown's bound, (\ref{HB}), we get that our estimate on the number of points in the spherical shell is valid only when it is much, much larger than the error term. To be specific,

$$ N^{\frac{1}{3}-\frac{1}{s}} \gtrsim \left(\sqrt{N}\right)^{\frac{21}{16}}, $$
which happens when $s > \frac{16}{9}$. Note that this reduces to $\frac{3}{2}$ if we assume the full conjecture.

Now, in our range on $s$, we have that the number of incidences between points in our set and annuli is $I_3=Nb \approx N^{2-\frac{1}{s}}$.

\vskip.125in 

\subsubsection{The Mattila-type example}

We will construct the discrete analog of $\mathcal{M}_{3,n}(\delta)$ in a very similar manner to the way we constructed $\mathcal{M}_{2,n}(\alpha)$. Given a large integer, $N$, we let $\delta$ be chosen later. Set $\alpha = 1-\delta$, and $\beta=\frac{\delta}{2}$. Our parameter, $s$, will be $2\alpha + \beta$, or $2- \frac{3 \delta}{2}.$ Let $M$ be such that $N=M^s$. Now, let $m_1$ satisfy 
$$\alpha = \frac{\log2}{\log m_1},$$ and let $m_2$ satisfy 
$$\beta = \frac{\log 2}{\log m_2}.$$ 

\vskip.125in 

The biggest difference in construction between this point set and $\mathcal{M}_{2,n}(\alpha)$ is that there will be no ``Lebesgue" dimension. We will actually take the Cartesian product of three Cantor-like sets, which are to be constructed as above. Specifically, we will have
$$\mathcal{M}_{3,n}(\delta) = \mathcal{C}_{\alpha,n} \times \mathcal{C}_{\alpha,n} \times \mathcal{C}_{\beta,n}.$$

Yet again, in any annulus of radius 1, there will be a rectangle of length $m_1^{-\frac{n}{2}}$, width $m_1^{-\frac{n}{2}}$, and depth $m_2^{-n}$, which contains about $m^{n}$ points. Since there are $N$ such annuli, the total number of incidences will be about
$$I_4=Nm^{n} = N^{1+\frac{\alpha}{2\alpha+\beta}} = N^{1+\frac{1-\delta}{2-3\frac{\delta}{2}}}=N^{\frac{5}{3}-\frac{1}{3s}}.$$

\vskip.125in 

\subsubsection{Comparing the lattice to the Mattila-type set}

Again, we notice that for a particular range on $s$, $\mathcal{M}_{3,n}(\delta)$ has more incidences than the lattice.

\begin{align*}
I_3 &\lesssim I_4\\
N^{2-\frac{1}{s}} &\lesssim N^{\frac{5}{3}+\frac{1}{3s}}\\
1/3 &< \frac{2}{3s}\\
s &< 2.\\ 
\end{align*}

Recalling, as before, the ranges of $s$ for which both incidence estimates hold, we note that these estimates are only valid when $s>16/9$, although this would be reduced to $s>3/2$ if we assumed the full strength of the conjecture. In summary, if $s$ is within the prescribed range, $\mathcal{M}_{3,n}(\delta)$ provides more incidences between points and spherical shells than the lattice.

\vskip.125in 

\section{An analog of Falconer's estimate in vector spaces over finite fields} 

\vskip.125in 

In this section we shall see that the phenomenon captured by Valtr's example still persists in vector spaces over finite fields. Let $E \subset {\Bbb F}_q^d$, the $d$-dimensional vector space over the field with $q$ elements. The following result from \cite{IR07ii} may be viewed as an analog of (\ref{falconerestimate}). 
\begin{theorem} Let  $E \subset {\Bbb F}_q^d$, $d \ge 2$, and let $|E|$ denote its size. Let 
$$ S_t=\{x \in {\Bbb F}_q^d: x_1^2+x_2^2+\dots+x_d^2=t \}.$$ 

Then 
$$ |\{(x,y) \in E \times E: x-y \in S_t \}|=\frac{{|E|}^2}{q}+{\mathcal D}(q),$$ as long as $t \not=0$, where 
$$ |{\mathcal D}(q)| \leq 2|E|q^{\frac{d-1}{2}}.$$ 
\end{theorem}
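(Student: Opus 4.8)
The plan is to use standard Fourier analysis on the group ${\Bbb F}_q^d$. First I would introduce the additive character $\chi$ of ${\Bbb F}_q$ (a nontrivial one) and recall the orthogonality relation $\sum_{x \in {\Bbb F}_q^d} \chi(x \cdot m) = q^d$ if $m = 0$ and $0$ otherwise. I would write the quantity to be estimated as
\begin{align*}
|\{(x,y) \in E \times E: x - y \in S_t\}| &= \sum_{x,y \in E} S_t(x-y) \\
&= \sum_{x,y \in E} q^{-d} \sum_{m \in {\Bbb F}_q^d} \widehat{S_t}(m) \chi((x-y)\cdot m),
\end{align*}
where $S_t$ also denotes the indicator function of the sphere and $\widehat{S_t}(m) = \sum_{x \in S_t} \chi(-x\cdot m)$ is its Fourier transform in the unnormalized convention. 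Pulling out the $m = 0$ term gives $q^{-d} |S_t| |E|^2$, and since a nondegenerate sphere $S_t$ with $t \ne 0$ has $|S_t| = q^{d-1} + O(q^{(d-1)/2})$ — in fact $|S_t| = q^{d-1}(1 + o(1))$ by Gauss sum evaluation — this produces the main term $|E|^2/q$ up to an error absorbed into ${\mathcal D}(q)$.

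Next I would bound the contribution of the nonzero frequencies. Writing $\widehat{E}(m) = \sum_{x \in E} \chi(x \cdot m)$, the $m \ne 0$ part equals $q^{-d} \sum_{m \ne 0} \widehat{S_t}(m) |\widehat{E}(m)|^2$. By Cauchy--Schwarz this is at most $q^{-d} \max_{m \ne 0} |\widehat{S_t}(m)| \cdot \sum_{m \in {\Bbb F}_q^d} |\widehat{E}(m)|^2$, and by Plancherel $\sum_m |\widehat{E}(m)|^2 = q^d |E|$. The key input is then the Fourier decay of the sphere: for $t \ne 0$ and $m \ne 0$, one has the Gauss-sum / Kloosterman-type bound $|\widehat{S_t}(m)| \le 2 q^{(d-1)/2}$ (this is the finite field analog of the stationary phase estimate (\ref{stationaryphase}); it comes from completing the square in the quadratic form defining $S_t$ and evaluating the resulting quadratic Gauss sum, whose modulus is $q^{d/2}$ up to the sphere-normalization factor, giving the clean constant $2$ after handling the $t \ne 0$ versus $t = 0$ distinction carefully). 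Combining, the error term is bounded by $q^{-d} \cdot 2q^{(d-1)/2} \cdot q^d |E| = 2|E| q^{(d-1)/2}$, which is exactly the claimed bound on $|{\mathcal D}(q)|$.

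The main obstacle is the precise evaluation of $\widehat{S_t}(m)$, i.e. establishing $|\widehat{S_t}(m)| \le 2q^{(d-1)/2}$ uniformly for $m \ne 0$. This is where all the arithmetic lives: one writes $S_t(x) = q^{-1}\sum_{s \in {\Bbb F}_q} \chi(s(x_1^2 + \dots + x_d^2 - t))$, substitutes into $\widehat{S_t}(m)$, and is left with $q^{-1}\sum_{s \ne 0} \chi(-st) \prod_{j=1}^d \sum_{x_j} \chi(s x_j^2 + m_j x_j)$ plus an $s = 0$ term. Each inner sum is a classical Gauss sum of modulus $\sqrt{q}$ (for $s \ne 0$), so the product has modulus $q^{d/2}$; completing the square turns the $m$-dependence into a factor $\chi(-|m|^2/(4s))$, and one then sums a geometric-type character sum over $s$, using $t \ne 0$ to get cancellation and the bound with constant $2$. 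I would either carry this out via the standard formula for Gauss sums (distinguishing $d$ even from $d$ odd, and tracking the quadratic character $\eta$ and the value $\eta(-1)$) or simply cite the computation from \cite{IR07ii}; since the theorem is quoted from that paper, citing the Gauss-sum estimate there and assembling the Plancherel argument above suffices. Everything else — orthogonality, Cauchy--Schwarz, Plancherel — is routine.
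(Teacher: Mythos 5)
Your proposal is correct and takes essentially the same route as the paper: you expand the count in additive characters, extract the zero frequency as the main term $\frac{|E|^2}{q}$, and bound the nonzero frequencies by Plancherel together with the Fourier decay $|\widehat{S_t}(m)|\lesssim q^{\frac{d-1}{2}}$ (unnormalized convention), which is precisely the paper's proof of Theorem \ref{general} specialized to $\Gamma=S_t$. The only caveat is that the cancellation in the final sum over $s$ comes from the Weil--Sali\'e bound on (twisted) Kloosterman sums rather than a ``geometric-type'' character sum estimate, but you correctly identify it as Kloosterman-type and defer to \cite{IR07ii}, exactly as the paper does.
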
 

It follows that if $|E|$ is much larger than $q^{\frac{d+1}{2}}$, then 
\begin{equation} \label{finitefalconerestimate} |\{(x,y) \in E \times E: x-y \in S_t \}|=\frac{{|E|}^2}{q}(1+o(1)). \end{equation} 

Our first observation, already implicit in (\cite{IR07ii}), is that the same estimate holds if $S_t$ is replaced by any set $\Gamma \subset {\Bbb F}_q^d$ such that 
$$|\Gamma|=q^{d-1}(1+o(1))$$ and 
\begin{equation} \label{salem} |\widehat{\Gamma}(m)| \lesssim q^{-\frac{d+1}{2}} \end{equation} for any $m \not=(0, \dots, 0)$. Here the Fourier transform of a function $f: {\Bbb F}_q^d \to {\mathcal C}$ is defined by the relation 
$$ \widehat{f}(m)=q^{-d} \sum_{x \in {\Bbb F}_q^d} \chi(-x \cdot m) f(x),$$ where $\chi$ is the principal character on ${\Bbb F}_q$. 

The fact that (\ref{salem}) holds for $\Gamma=S_t$, $t \not=0$, follows from a rather deep fact, due to Weil and Salie, that the twisted Kloosterman sum satisfies the bound 
$$ \left| \sum_{s \not=0} \chi \left(as+\frac{1}{s} \right) \gamma(s) \right| \leq 2\sqrt{q},$$ where $\gamma$ is a multiplicative character of order $2$. See \cite{Sa32} and\cite{We48}. See also \cite{IR07ii} and the references contained therein. 

\begin{theorem} \label{general} Let $E \subset {\Bbb F}_q^d$, $d \ge 2$. Suppose that $\Gamma \subset {\Bbb F}_q^d$ satisfies (\ref{salem}). Then 
$$ |\{(x,y) \in E \times E: x-y \in \Gamma \}|=\frac{{|E|}^2}{q}+{\mathcal D}(q),$$ where 
$$ |{\mathcal D}(q)| \lesssim |E|q^{\frac{d-1}{2}}.$$ 
\end{theorem}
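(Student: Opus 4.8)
The plan is to carry out a Fourier-analytic (Kloosterman-style) counting argument of exactly the kind used in \cite{IR07ii} for $\Gamma = S_t$, but keeping only the hypothesis \eqref{salem}. First I would write the counting function as
$$ |\{(x,y) \in E \times E: x-y \in \Gamma\}| = \sum_{x,y \in {\Bbb F}_q^d} E(x) E(y) \Gamma(x-y),$$
where $E$ and $\Gamma$ also denote the indicator functions of the respective sets. Expanding $\Gamma(x-y)$ in its Fourier series, $\Gamma(z) = \sum_m \widehat{\Gamma}(m) \chi(z \cdot m)$, and interchanging sums, the expression becomes
$$ \sum_m \widehat{\Gamma}(m) \sum_{x,y} E(x) E(y) \chi((x-y)\cdot m) = q^{2d} \sum_m \widehat{\Gamma}(m) |\widehat{E}(m)|^2,$$
after recognizing the inner double sum as $q^{2d} \widehat{E}(m) \overline{\widehat{E}(m)}$ (using $\overline{\chi(y\cdot m)} = \chi(-y\cdot m)$ and the definition of $\widehat{E}$).

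Next I would separate the $m = (0,\dots,0)$ term from the rest. The zero term contributes $q^{2d} \widehat{\Gamma}(0) |\widehat{E}(0)|^2 = q^{2d} \cdot q^{-d}|\Gamma| \cdot q^{-2d}|E|^2 = q^{-d}|\Gamma| \cdot |E|^2$. Since $|\Gamma| = q^{d-1}(1+o(1))$, this is $\frac{|E|^2}{q}(1+o(1))$, the claimed main term (up to absorbing the $o(1)$ into ${\mathcal D}(q)$; one should note the main term in the statement is written as $\frac{|E|^2}{q}$ exactly, so the discrepancy between $q^{-d}|\Gamma|$ and $q^{-1}$ contributes $|E|^2 \cdot o(q^{-1})$, which is dominated by the stated error bound $|E| q^{(d-1)/2}$ whenever $|E| \lesssim q^{(d+1)/2}$, and is trivial otherwise since $|E| \le q^d$ forces... — in any case this fits comfortably inside ${\mathcal D}(q)$). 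For the remaining terms, I would bound
$$ \left| q^{2d} \sum_{m \neq 0} \widehat{\Gamma}(m) |\widehat{E}(m)|^2 \right| \le q^{2d} \cdot \left(\max_{m \neq 0} |\widehat{\Gamma}(m)|\right) \sum_{m \neq 0} |\widehat{E}(m)|^2 \lesssim q^{2d} \cdot q^{-\frac{d+1}{2}} \sum_{m} |\widehat{E}(m)|^2,$$
using \eqref{salem}. By Plancherel, $\sum_m |\widehat{E}(m)|^2 = q^{-d} \sum_x E(x)^2 = q^{-d}|E|$. Substituting gives the bound $q^{2d} \cdot q^{-\frac{d+1}{2}} \cdot q^{-d}|E| = |E| q^{\frac{d-1}{2}}$, which is precisely ${\mathcal D}(q)$.

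The argument is essentially mechanical once the Fourier expansion is set up; the only genuine content is the input \eqref{salem}, which the excerpt has already granted us (and which for $\Gamma = S_t$ rests on the deep Weil--Salie bound for twisted Kloosterman sums). So there is no real obstacle in proving Theorem \ref{general} itself — the "hard part" is entirely upstream, in verifying \eqref{salem} for the $\Gamma$ of interest. The one point requiring a little care in the write-up is bookkeeping the $o(1)$ term: making sure that the difference between $q^{-d}|\Gamma|$ and $q^{-1}$ is genuinely absorbed into the stated error term $|E| q^{(d-1)/2}$ for all relevant sizes of $|E|$, rather than silently assuming $|E|$ is large. I would handle this by noting that $q^{-d}|\Gamma| = q^{-1} + q^{-1} o(1)$, so the excess is $|E|^2 q^{-1} o(1)$; when $|E| \le q^{(d+1)/2}$ this is $\le |E| q^{(d-1)/2} o(1)$, and when $|E| > q^{(d+1)/2}$ one simply keeps the $\frac{|E|^2}{q}(1+o(1))$ form, which is the regime \eqref{finitefalconerestimate} is stated for anyway.
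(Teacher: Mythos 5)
Your proposal is correct and follows essentially the same route as the paper: expand $\Gamma(x-y)$ in Fourier series, identify the zero frequency as the main term, and bound the nonzero frequencies via (\ref{salem}) together with Plancherel, yielding $q^{2d}\cdot q^{-\frac{d+1}{2}}\cdot q^{-d}|E|=|E|q^{\frac{d-1}{2}}$. Your extra bookkeeping of the $o(1)$ discrepancy between $q^{-d}|\Gamma|$ and $q^{-1}$ is a point the paper passes over silently (it invokes ``the size condition on $\Gamma$'' without spelling it out), so that care is welcome but does not change the argument.
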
 

In order to establish Theorem \ref{general}, observe that  
$$ |\{(x,y) \in E \times E: x-y \in \Gamma \}|$$
$$=\sum E(x)E(y)\Gamma(x-y)$$
$$=\frac{{|E|}^2}{q}+q^{2d} \sum_{m \not=(0, \dots, 0)} {|\widehat{E}(m)|}^2 \widehat{\Gamma}(m)$$
$$=\frac{{|E|}^2}{q}+{\mathcal D}(q),$$ where by (\ref{salem}), 
$$ |{\mathcal D}(q)| \lesssim q^{2d} \cdot q^{-\frac{d+1}{2}} \cdot \sum_m {|\widehat{E}(m)|}^2$$ 
$$ =q^d \cdot q^{\frac{d-1}{2}} \cdot q^{-d} \sum_x {|E(x)|}^2=|E|q^{\frac{d-1}{2}}.$$ 

In the manipulations above we used the size condition on $\Gamma$, (\ref{salem}) and the facts that 
$$ f(x)=\sum_{m \in {\Bbb F}_q^d} \chi(x \cdot m) \widehat{f}(m)$$ and 
$$ \sum_{m \in {\Bbb F}_q^d} {|\widehat{f}(m)|}^2=q^{-d} \sum_{x \in {\Bbb F}_q^d} {|f(x)|}^2.$$ 

It follows that if $\Gamma \subset {\Bbb F}_q^d$, $d \ge 2$, with $|\Gamma|=q^{d-1}(1+o(1))$ and (\ref{salem}) holds, then 
\begin{equation} \label{finiteBfalconerestimate} |\{(x,y) \in E \times E: x-y \in \Gamma \}|=\frac{{|E|}^2}{q}(1+o(1)) \end{equation} if $|E|$ is much greater than $q^{\frac{d+1}{2}}$. 

\vskip.125in 

\subsection{Sharpness of the finite field version of Falconer (estimates (\ref{finitefalconerestimate}) and (\ref{finiteBfalconerestimate}))} 

\subsubsection{The spherical case} Theorem 2.7 in \cite{HIKR10} implies that if $d$ is odd, the (\ref{finitefalconerestimate}) does not hold. More precisely, the proof contains a construction of a set $E$ such that 
\begin{equation} \label{rudnev} |E|=\frac{1}{2}cq^{\frac{d+1}{2}} \ \text{and} \ |\Delta(E)| \leq cq,
\end{equation} where 
$$ \Delta(E)=\{||x-y||: x,y \in E\}$$ with 
$$ ||x||=x_1^2+x_2^2+\dots+x_d^2.$$ 

Keeping (\ref{rudnev}) in mind, observe that if 
$$ |\{(x,y) \in E \times E: x-y \in S_t \}| \leq C \frac{{|E|}^2}{q}$$ for all $t \not=0$, then by the pigeon-hole principle, 
$$ |\Delta(E)| \ge \frac{q}{C}.$$ 

In view of (\ref{rudnev}), we conclude that $C \ge \frac{1}{c}$, so by taking $c$ to be arbitrarily small, we see, in a quantitative way, that as the size of $E$ drops below $q^{\frac{d+1}{2}}$, the estimate (\ref{finitefalconerestimate}) no longer holds for at least one $t$. By working with the example in (\cite{HIKR10}) directly, one can show that the same conclusion holds for every $t$. We leave the details to the interested reader. 

\vskip.125in 

\subsubsection{The parabolic case} In even dimensions, we are so far unable to produce an example showing that (\ref{finitefalconerestimate}) fails below the $|E| \approx q^{\frac{d+1}{2}}$ threshold in the spherical case. However, we are able to follow Valtr and produce an example in all dimensions when the sphere is replaced by a paraboloid 
$$ H=\{x \in {\Bbb F}_q^d: x_d=x_1^2+x_2^2+\dots+x_d^2 \}.$$ 

The fact that $H$ satisfies the Fourier estimate (\ref{salem}) is a straightforward consequence of the classical Gauss sum bounds. See, for example, \cite{LN97}. Thus if $|E|$ is much greater than $q^{\frac{d+1}{2}}$, then (\ref{finiteBfalconerestimate}) holds. 

To construct a sharpness example, let $q$ be a prime, so that ${\Bbb F}_q={\Bbb Z}_q$, the cyclic group on $q$ letters. We shall view the elements of ${\Bbb Z}_q$ via their representatives as 
$$ \{(0,1,2, \dots, q-1\}.$$ 

Let $A \subset {\Bbb Z}_q$ be given by 
$$ \{0,1,2, \dots, q^{\frac{1}{2}-\delta}\},$$ 

\vskip.125in 

where $\delta$ is a tiny fixed positive number and let ${\mathcal A} \subset {\Bbb Z}_q$ be given by 
$$ \{0,1,2, \dots, (d-1)q^{1-2\delta} \}.$$ 

Let $E$ be the Cartesian product of $(d-1)$ copies of $A$ and one copy of $\mathcal{A}$, or
$$E=A \times A \times \dots \times A \times {\mathcal A}.$$ 

Observe that 
$$ |E| \approx q^{\frac{d+1}{2}} q^{-(d+1)\delta}.$$ 

Then 
$$ |\{(x,y) \in E \times E: x-y \in H \}| \approx {|A|}^{d-1} \cdot {|A|}^{d-1} \cdot |{\mathcal A}|.$$ 

Observe that the right hand side of this expression divided by $\frac{{|E|}^2}{q}$ is $\approx q^{2 \delta}$, which can be made arbitrarily large. This completes the construction. 

\vskip.125in 

\section{Concluding remarks} 

\vskip.125in 

The purpose of this section is to summarize our findings and emphasize the questions yet to be answered. We have examined three manifestations of the incidence phenomenon, in discrete, continuous and arithmetic settings. The first problem stems from the celebrated Szemer\'edi-Trotter incidence theorem which we state in the following form. See, for example, \cite{Sz97}. Let $\Gamma$ be a strictly convex curve in the plane and let $P$, $L$ denote sets each containing $N$ points in the plane. Then 
\begin{equation} \label{stgeneral} |\{(x,y) \in P \times L: x-y \in \Gamma \}| \lesssim N^{\frac{4}{3}}. \end{equation}

Here $| \cdot |$ denotes the cardinality of a finite set. Valtr's construction, described above, shows that (\ref{stgeneral}) is, in general, sharp, just as it is in the case of incidences between points and lines. However, Valtr's example is for a particular $\Gamma$ only, namely for the parabola. 
\begin{problem} For which curves $\Gamma$ is the estimate (\ref{stgeneral}) sharp for? The Erd\H os single distance conjecture says that (\ref{stgeneral}) holds with $N^{\frac{4}{3}}$ on the right hand side replaced by $N \log(N)$ if $\Gamma$ is the unit circle. Even on a conjectural level, it would useful to attempt to understand the difference between the circle and the parabola that makes Valtr's example possible. \end{problem} 

Another problem examined in this paper is the Falconer estimate (\ref{Bfalconerestimate}). Mattila's example implies that this estimate is sharp for every symmetric convex body $B$ in the plane with a smooth boundary and everywhere non-vanishing curvature. In this paper, we extend his example to three dimensions. However, in dimensions four and higher, we currently do not have a universal sharpness example, only the one for a particular convex body given by Theorem \ref{main}.

\begin{problem} Does there exist a universal sharpness example for the estimate 
(\ref{Bfalconerestimate})? More precisely, given any centrally symmetric convex body $B$ in 
${\Bbb R}^d$, $d \ge 4$, with a smooth boundary and non-vanishing Gaussian curvature, can one show that $I_s(\mu)<\infty$ does not imply that (\ref{Bfalconerestimate}) holds? 
\end{problem} 

The third problem considered in this paper is the finite field analog of (\ref{Bfalconerestimate}) given by (\ref{finiteBfalconerestimate}). We have a sharpness example in the case $\Gamma=S_t$, a sphere, in odd dimensions and a sharpness example in the case $\Gamma$ is the paraboloid in all dimensions. Once again, the question is whether there is a universal sharpness example. 

\begin{problem} Does there exist a universal sharpness example for the finite field analog of Falconer's estimate (\ref{Bfalconerestimate})? More precisely, given any $\Gamma \subset {\Bbb F}_q^d$, $d \ge 2$, such that 

$$ |\Gamma|=q^{d-1}(1+o(1)) \ \text{and} \ |\widehat{\Gamma}(m)| \lesssim q^{-\frac{d+1}{2}}$$ 

\vskip.125in 

for any $m \not=(0, \dots, 0)$, does there exist $E \subset {\Bbb F}_q^d$ of size $|E|=cq^{\frac{d+1}{2}}$ such that 
$$ |\{(x,y) \in E \times E: x-y \in \Gamma \}| \ge \lambda(c) \frac{{|E|}^2}{q},$$ where $\lambda(c) \to \infty$ as $c \to 0$. In particular, is there a sharpness example in even dimension in the case when $\Gamma$ is a sphere? 
\end{problem} 

We have pointed out in Section \ref{discretevscontinuous} that while the Lenz example rules out a general non-trivial incidence theorem for points and translates of a fixed sphere in dimensions four and higher, interesting results are possible if a structural condition ($s$-adaptability) is imposed as was established in \cite{IJL09} in the case of well-distributed point sets. It should be possible to adapt the techniques of (\cite{IJL09} to resolve the following question. 

\begin{problem} To show that if a set $P$ containing $N$ points in the unit cube is $s$-adaptable (see Definition \ref{adaptable} above) for $s>\frac{d+1}{2}$, then 
\begin{equation} \label{sincidence} |\{(x,y) \in P \times P: {||x-y||})B=t \}| \lesssim N^{2-\frac{1}{s}} \end{equation} as long as $B$ is convex symmetric and the boundary of $B$ is smooth and has everywhere non-vanishing Gaussian curvature. Here, $| \cdot |$ denotes the cardinality of a finite set. 
\end{problem} 

Moreover, it would be interesting to bridge the gap between (\ref{sincidence}) and the Lenz example by proving an incidence bounds that depends on the degree to which the condition of $s$-adaptability holds. 

As a final remark, we note that discrete incidence theorems proved using methods related to the Falconer estimate produce, as a by-product, incidence theorems for annuli and points, not just points and surfaces. Furthermore, incidence estimates obtained this way produce, in many ranges of thickness of annuli, the ``expected" number of incidence. To put it simply, as the thickness of the annular region increases, a transition is made from ``extremal" to ``expected" results. Finding the precise threshold for this transition should lead to some interesting results.

\newpage

\end{document}